\NewCommandCopy{\latexref}{\ref}
\NewCommandCopy{\latexpageref}{\pageref}
\RenewDocumentCommand{\ref}{m}{{\rm\latexref{#1}}}
\RenewDocumentCommand{\pageref}{m}{{\upshape\latexpageref{#1}}}
 \newtheorem{thm}{Theorem}[section]
 \newtheorem{cor}[thm]{Corollary}
 \newtheorem{lem}[thm]{Lemma}
 \newtheorem{prop}[thm]{Proposition}
 \theoremstyle{definition}
 \theoremstyle{remark}
 \newtheorem{rem}[thm]{Remark}
 \numberwithin{equation}{section}
\begin{document}

%
%
%
%
%
%
%
%
%

\title[A hybrid Nehari-Schauder fixed point result for systems]
 {Hybrid Nehari-Schauder type fixed point results and applications}

\author[R. Precup]{Radu Precup$^\ast$}

\address{%
Faculty of Mathematics and Computer Science, and \\
Institute of Advanced Studies in Science and Technology, Babe\c{s}-Bolyai University,\\  Cluj-Napoca, 400084, Romania, and \\
Tiberiu Popoviciu Institute of Numerical Analysis, Romanian
Academy,\\ Cluj-Napoca, 400110, Romania}

\email{r.precup@ictp.acad.ro}

\thanks{$^\ast$Corresponding author. Email: r.precup@ictp.acad.ro}
\author[A. Stan]{Andrei Stan}
\address{Faculty of Mathematics and Computer Science, Babe\c{s}-Bolyai University, \\Cluj-Napoca, 400084, Romania, and \\
Tiberiu Popoviciu Institute of Numerical Analysis, Romanian
Academy,\\ Cluj-Napoca, 400110, Romania}
\email{andrei.stan@ubbcluj.ro}
\subjclass{35J50, 47J30, 45G15}

\keywords{Nehari manifold method, hybrid fixed point theorem, nonlinear integral equation}

\date{January 1, 2004}

\begin{abstract}
This paper develops a fixed point version of the well-known Nehari manifold method from  critical point theory. The main result is formulated for systems of operator equations, relying on the fixed point theorems of Schauder and Schaefer. The framework also allows for potential extensions combining our Nehari type approach with other fixed point principles. To demonstrate the applicability of the method, an example involving a system of nonlinear integral equations is provided.


\end{abstract}

\maketitle
\section{Introduction and Preliminaries}

In the context of critical point theory, various methods have been developed to establish conditions such that a given functional possesses critical points.
 Among them, we highlight the Nehari manifold method, which has its origins in the classical works of Nehari \cite{nehari1,nehari2}. A remarkable paper about this method is the one  by Szulkin and Weth \cite{sw}, which provides a clear exposition of how this method should be applied, along with illustrative examples.

Typically, following \cite{sw}, the procedure reads as follows: given a  $C^1$ functional \(E\colon X\to \mathbb{R}\) (usually of energy type), where $X$ is a Banach space, one considers the associated \textit{Nehari manifold} defined by
\begin{equation}\label{nehari}
    \mathcal{N} := \left\{ u\in X\setminus\{0\} \;: \; \langle E'(u), u \rangle = 0 \right\}.
\end{equation}
 The main idea of this method is to show that the infimum of \(E\) over \(\mathcal{N}\) is attained at some point \(u_0 \in \mathcal{N}\), and that this point is  a critical point of \(E\). Such a point is usually referred to as a ground state solution, as it minimizes the functional \(E\) among all nontrivial critical points, since every nontrivial critical point of \(E\) lies on the Nehari manifold \(\mathcal{N}\).

This approach is typically effective under the following assumptions:
\begin{itemize}
    \item[(i)] \(E \in C^1(X, \mathbb{R})\), and for each \(u \in X\setminus \{0\}\), the function  \begin{equation*}
     \varphi\colon \mathbb{R}_+\setminus\{0\}\to \mathbb{R},\quad   \varphi(s) := E(su),
    \end{equation*}  admits a unique critical point \(s_u > 0\);
    
    \item[(ii)] \(\varphi'(s)>0\) for $0<s<s_u$ and \(\varphi'(s)<0\) for $s>s_u$;
    
    \item[(ii)] The mapping $u\mapsto s_u$ is continuous.
\end{itemize}
However, when dealing with problems that lack a variational structure, the method described above becomes ineffective. Since a critical point problem is equivalent to a fixed point problem (for instance, by considering \(u = u - E'(u)\)), the motivation of this paper is to adapt ideas from the Nehari manifold method to the setting of a standard fixed point problem of the form \(T(u) = u\). This approach becomes particularly relevant in situations where there is no underlying functional whose derivative is related to the operator \(T\) (e.g., when the problem can not be equivalently expressed as a critical point problem).

The novelty of this paper lies in the development of an entirely new fixed point method, inspired by the Nehari manifold technique, and formulated in the context of a fixed point problem for a system of two operator equations.

The central idea introduced in this paper is to replace the classical Nehari manifold \(\mathcal{N}\) with another set \(U_b\), defined as the collection of all nonzero points in a given domain for which a certain functional \(\mathcal{F}\) vanishes. Thus, given a fixed point problem \((T_1(u,v), T_2(u,v)) = (u,v)\), we derive conditions such that a fixed point exists in \(K_1 \times D\), where \(K_1\) is a cone in a Banach space and \(D\) is a bounded, closed, and convex set of a possibly different Banach space.
To the best of our knowledge, the idea of adapting the Nehari manifold method to the non-variational setting is entirely new, both for single equations and for systems. In the case of systems, our approach further integrates this idea with Schauder’s fixed point theorem through the use of the fixed point index.

We would like to mention the paper \cite{figueredo}, which, using the Nehari manifold method, provides conditions that ensure the existence of a critical point of a functional within a cone with nonempty interior.
 Although the methods presented in \cite{figueredo} are well suited for many applications, a key distinction from our approach is that the cone \(K_1\) in our setting is not required to have nonempty interior.
Also, in \cite{a,pa}, the authors employed the Nehari manifold technique to obtain localization of critical points in conical annular sets, where the cones involved are closed, as in our case, and no openness condition was imposed.

In the variational case, with an appropriate choice of the functional \(\mathcal{F}\), our set \(U_b\) essentially coincides with the Nehari manifold \(\mathcal{N}\). However, a key advantage of our approach is that it allows for the construction of alternative sets that may possess more favorable geometric properties. As a result, this can lead to different and possibly less restrictive conditions for the existence of a critical point.
Of course, it must be emphasized that if a critical point is obtained via our method, and the set \(U_b\) differs from \(\mathcal{N}\) (in the variational setting), then the resulting critical point generally loses the property of being  a ground state solution.


It is worth noting that the literature contains numerous contributions in which various fixed point techniques are combined to study systems of equations. For instance, Krasnosel’skiĭ’s fixed point theorem has been applied componentwise to systems of two equations (see, e.g., \cite{precup_Krasnoselskii,precup_rodriguez2023}), combined with the method of lower and upper solutions \cite{rodriguez2025}, or used jointly with Schauder’s fixed point theorem (see \cite{jorje_Krasnoselskii_schauder}).

Likewise, the proposed non-variational Nehari-type approach can be also  combined with other fixed point principles, leading to other new hybrid results such as Nehari–Krasnosel’skiĭ, Nehari–Darbo, Nehari–Sadovskiĭ, or Nehari–Mönch, under either invariance conditions or Leray–Schauder-type boundary conditions.
Moreover, the method can be extended to operator systems of higher dimension, by combining the Nehari technique with various fixed point principles—for instance, Avramescu’s theorem. 
Another
direction could aim at exploiting the properties of the fixed point degree in combination with
the Nehari technique.

We conclude this introductory section by recalling  some  useful properties of the fixed point index for compact maps. For more details, we refer the reader to \cite{amann,granas} (see also \cite[Section 20.1]{deimling}).

\begin{prop}
Let \(C\) be a closed and convex subset of a normed linear space \(X\), and let \(U\) be a relatively open subset of \(C\). Additionally, let
\[
T : \bar{U} \to C
\]
be a compact map with no fixed points on the boundary of \(U\). Then, the fixed point index of \(T\) in \(C\) over \(U\), denoted by \(\mathrm{ind}_C(T,U)\), has the following properties:
\begin{itemize}
    
    \item[1.](Existence) If \(\mathrm{ind}_C(T,U) \neq 0\), then there exists \(u \in U\) such that \(u = Tu\).
    
    \item[2.](Homotopy invariance)  If 
    \[
    H : \overline{U}\times [0,1] \to C
    \]
    is a compact mapping such that
\[
H(u,t)\,\neq\, u
\quad \text{for all } u\in \partial U \text{ and } t\in [0,1],
\]
then
\[
\mathrm{ind}_C(H(\cdot,1), U) = \mathrm{ind}_C(H(\cdot,0), U).
\]
    
    \item[3.](Normalization)  If \(T\) is  constant  with \(T(u) = u_0\) for every \(u \in \overline{U}\), then
    \[
    \mathrm{ind}_C(T,U) =
    \begin{cases}
    1, & \text{if } u_0 \in U,\\[3pt]
    0, & \text{if } u_0 \in  C\setminus \bar{U}.
    \end{cases}
    \]
\end{itemize}
\end{prop}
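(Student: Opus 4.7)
The plan is to construct the fixed point index by finite-dimensional approximation and then deduce each of the three listed properties from the corresponding property of the Brouwer degree. Since $T:\bar U\to C$ is compact, $T(\bar U)$ is relatively compact; for each $\varepsilon>0$ I would pick a finite $\varepsilon$-net $\{y_1,\dots,y_{n}\}\subset T(\bar U)\subset C$ and form the Schauder projection $P_\varepsilon$ onto $C_\varepsilon:=\mathrm{conv}\{y_1,\dots,y_n\}\subset C$. The composition $T_\varepsilon:=P_\varepsilon\circ T$ satisfies $\|T_\varepsilon u-Tu\|<\varepsilon$ on $\bar U$ and takes values in the finite-dimensional convex set $C_\varepsilon$. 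A short compactness argument (using that $T$ is compact and fixed-point-free on $\partial U$) yields a uniform gap $\delta:=\inf_{u\in\partial U}\|Tu-u\|>0$, whence for $\varepsilon<\delta$ the approximation $T_\varepsilon$ has no fixed points on $\partial U\cap C_\varepsilon$. One then sets
\[
\mathrm{ind}_C(T,U):=\deg_B\bigl(I-T_\varepsilon,\,U\cap C_\varepsilon,\,0\bigr),
\]
the Brouwer degree computed in the finite-dimensional ambient space of $C_\varepsilon$.

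The crucial step is well-definedness. Given two admissible approximations $T_\varepsilon, T_{\varepsilon'}$, I would embed both in a common finite-dimensional convex subset of $C$ and connect them by the linear homotopy $H_t=tT_\varepsilon+(1-t)T_{\varepsilon'}$. The triangle inequality combined with the uniform gap $\delta$ ensures $\|H_t u-u\|>0$ on $\partial U$ for all $t\in[0,1]$, so Brouwer homotopy invariance yields the same degree for both choices.

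Once the index is in place, the three listed properties follow almost verbatim from their Brouwer-degree counterparts. \emph{Existence}: a nonzero index yields fixed points $u_\varepsilon$ of $T_\varepsilon$ in $U\cap C_\varepsilon$; by compactness of $T$, a subsequence converges to some $u\in\bar U$ with $Tu=u$, and the boundary hypothesis forces $u\in U$. \emph{Homotopy invariance}: compactness of $H:\bar U\times[0,1]\to C$ allows one common approximation $H_\varepsilon$ valid uniformly in $t$, reducing the claim to Brouwer homotopy invariance. \emph{Normalization}: for constant $T\equiv u_0$ one arranges $u_0\in C_\varepsilon$, so that $I-T_\varepsilon=I-u_0$ on $C_\varepsilon$; the Brouwer degree of $I-u_0$ equals $1$ if $u_0\in U$ and $0$ otherwise.

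The main obstacle, and where I would invest the most care, is in guaranteeing that the various homotopies employed in the well-definedness and homotopy-invariance steps stay admissible throughout—that is, remain fixed-point-free on $\partial U$ uniformly in the homotopy parameter. This rests entirely on the uniform positive gap $\delta$, which in turn follows from $T$ being compact and having no fixed points on $\partial U$; once that fact is secured, everything else is a routine transfer of Brouwer-degree theory through the approximation scheme.
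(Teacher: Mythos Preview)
The paper does not prove this proposition at all: it is stated as a recalled, classical result with references to Amann, Granas--Dugundji, and Deimling. So there is no ``paper's own proof'' to compare against.

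Your sketch is the standard textbook construction of the fixed point index on closed convex sets via Schauder projections and Brouwer degree, and is essentially what one finds in the cited references. One technical point you gloss over: the Brouwer degree $\deg_B(I-T_\varepsilon, U\cap C_\varepsilon, 0)$ is not literally well-defined as written, since $C_\varepsilon$ is a finite-dimensional convex \emph{set}, not an open subset of a linear space. The usual fix is either to work in the affine hull of $C_\varepsilon$ (where $U\cap C_\varepsilon$ need not be open) or, more cleanly, to extend $T$ to a neighborhood in $X$ via a Dugundji retraction $r:X\to C$ and compute the Leray--Schauder degree of $r\circ T$ on an open set in $X$. This is routine but should be acknowledged; otherwise the definition line is formally ill-posed. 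With that adjustment, your outline is correct and matches the standard development.
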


\section{Main results}
\subsection{Nehari-Schauder fixed point theorem}
Let \(\left(X_1, |\cdot|_1\right)\) and \(\left(X_2, |\cdot|_2\right)\) be two Banach spaces, let \(K_1 \subset X_1\) be a non-degenerate cone, i.e., a closed, convex set with \(\lambda K_1 \subset K_1\) for all \(\lambda \in \mathbb{R}_+\),  \(K_1 \setminus \{0\} \neq \emptyset\), and \(D \subset X_2\)  a nonempty, bounded, closed, convex set.

We consider the fixed-point problem
\[
\begin{cases}
    T_1(u, v) = u \\
    T_2(u, v) = v,
\end{cases}
\]
where
\[
T = \bigl(T_1, T_2\bigr)\colon K_1 \times D \to K_1 \times D
\]
is a completely continuous operator. 
We also consider a continuous functional
\[
\mathcal{F}\colon K_1 \times K_1 \to \mathbb{R},
\]
with the property that for  any \(u \in K_1 \setminus \{0\}\) and  \(\lambda > 0\), one has
\begin{equation}\label{proprietate F}
    \mathcal{F}\bigl(\lambda\,u,\,u\bigr) = 0 
\,\,\  \text{ if and only if }\,\, 
\lambda = 1.
\end{equation}
Concerning the functional \(\mathcal{F}\), we impose the following conditions:
\begin{description}
    \item[(h1)] For every \( (u,v) \in \left(K_1 \setminus \{0\}\right)\times D\), there exists a unique number \(s(u,v)>0\) such that
    \[
        \mathcal{F}\Bigl( T_1\bigl( s(u,v)\,u,\, v \bigr),\, s(u,v)\,u \Bigr) = 0.
    \]
\end{description}
\begin{description}
\item[(h2)] There exists $0<m<M<\infty$ such that 
\[
\inf_{S_{K_1}^{1}\times D} s(\cdot, \cdot) > m,
\]
and \begin{equation*}
\sup_{S_{K_1}^{1}\times D}s\left( \cdot ,\cdot \right) <M,
\end{equation*}
where
\[
S_{K_1}^{1} := \bigl\{ u \in K_1 : \, |u|_1 = 1 \bigr\}.
\]
\end{description}
The first  result derived from conditions $\text{(h1)-(h2)}$,  essential for our subsequent analysis, is the continuity of the mapping \[
s \colon \bigl(K_1 \setminus \{0\}\bigr)\times D \to (0,\infty).
\]
\begin{prop}
Under assumptions $\emph{(h1)-(h2)}$, the mapping $s$
is continuous.
\end{prop}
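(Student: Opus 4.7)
The plan is to prove continuity sequentially. Fix $(u_0,v_0)\in (K_1\setminus\{0\})\times D$ and an arbitrary sequence $(u_n,v_n)\to(u_0,v_0)$; I will show $s(u_n,v_n)\to s(u_0,v_0)$.

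The first step is to derive a scaling identity from (h1) that transfers the bounds in (h2) from the unit sphere to arbitrary points of $K_1\setminus\{0\}$. For any $u\in K_1\setminus\{0\}$, $v\in D$ and $\lambda>0$, the number $t:=s(u,v)/\lambda$ satisfies
\[
\mathcal{F}\bigl(T_1(t\,\lambda u,v),\,t\,\lambda u\bigr)=\mathcal{F}\bigl(T_1(s(u,v)u,v),\,s(u,v)u\bigr)=0,
\]
and by the uniqueness part of (h1) applied to $(\lambda u,v)$ one gets $s(\lambda u,v)=s(u,v)/\lambda$. Writing $u=|u|_1\,\tilde u$ with $\tilde u\in S_{K_1}^{1}$, this yields $s(u,v)=s(\tilde u,v)/|u|_1$, so by (h2),
\[
\frac{m}{|u|_1}<s(u,v)<\frac{M}{|u|_1}\qquad\text{for every }(u,v)\in(K_1\setminus\{0\})\times D.
\]

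Apply this to the sequence: since $|u_n|_1\to|u_0|_1>0$, the values $s_n:=s(u_n,v_n)$ lie in a compact interval $[a,b]\subset(0,\infty)$ for all large $n$. Any subsequence therefore admits a further subsequence, still denoted $(s_n)$, converging to some $s^{\ast}\in[a,b]$. Since $s_n u_n\to s^{\ast}u_0$ in $X_1$ and $T$ (hence $T_1$) is continuous, we have $T_1(s_n u_n,v_n)\to T_1(s^{\ast}u_0,v_0)$. Passing to the limit in
\[
\mathcal{F}\bigl(T_1(s_n u_n,v_n),\,s_n u_n\bigr)=0
\]
and using continuity of $\mathcal{F}$ gives $\mathcal{F}(T_1(s^{\ast}u_0,v_0),s^{\ast}u_0)=0$. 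Because $s^{\ast}>0$ and $u_0\in K_1\setminus\{0\}$, the uniqueness in (h1) forces $s^{\ast}=s(u_0,v_0)$.

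Thus every subsequence of $(s_n)$ has a further subsequence converging to the same limit $s(u_0,v_0)$, which implies $s_n\to s(u_0,v_0)$, as required. The main point that needs care is the derivation of the scaling identity, since (h2) furnishes the uniform bounds only on $S_{K_1}^{1}\times D$; without transferring them one cannot extract a convergent subsequence of $(s_n)$ nor guarantee that its limit $s^{\ast}$ is strictly positive, which is essential for invoking uniqueness in (h1).
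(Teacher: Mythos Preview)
Your proof is correct and follows essentially the same route as the paper: establish the homogeneity relation $s(\lambda u,v)=s(u,v)/\lambda$ from the uniqueness in (h1), use it together with (h2) to trap $s(u_n,v_n)$ in a compact interval of $(0,\infty)$, and then pass to the limit along convergent subsequences using the continuity of $T_1$ and $\mathcal F$ and the uniqueness in (h1). The paper's argument is identical in substance, differing only in presentation.
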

\begin{proof}
First, let us note that for any pair \((u, v) \in \bigl(K_1 \setminus \{0\}\bigr)\times D\) and any \(t > 0\), we have
\begin{equation}\label{prop_s_homogeneity}
   s\bigl(t u, v\bigr) = \frac{1}{t}\, s\bigl(u,v\bigr).
\end{equation}
Indeed, denoting
\[
    \lambda: = s(tu,v)\, t,
\]
we observe that
\[
0= \mathcal{F}\bigl( T_1\bigl( s(tu,v)\, t\,u,\, v \bigr),\, s(tu,v)\, t\,u \bigr)
        = \mathcal{F}\bigl( T_1\bigl( \lambda\, u,\, v \bigr),\, \lambda\, u \bigr).
\]
The uniqueness of \(s(u,v)\), ensured by assumption $\text{(h1)}$, implies that
\[
    s(u,v) = \lambda,
\]
so  relation~\eqref{prop_s_homogeneity} is satisfied.

Let \((u, v) \in \bigl(K_1 \setminus \{0\}\bigr)\times D\), and consider any sequence \((u_k, v_k) \subset \bigl(K_1 \setminus \{0\}\bigr)\times D\) such that
\[
(u_k, v_k) \to (u, v) \quad \text{as } k \to \infty.
\]
Based on \eqref{prop_s_homogeneity}, we have \[
s(u_{k},v_{k}) = \frac{1}{\lvert u_{k}\rvert_1}\, s\left(\frac{u_{k}}{\lvert u_{k}\rvert_1},\,v_{k}\right).
\]
Since by $\text{(h2)}$  the sequence \(s\left(\frac{u_{k}}{\lvert u_{k}\rvert_1},\,v_{k}\right)\) is bounded both away from zero and from above, and  \[
\frac{1}{\lvert u_{k}\rvert_1}\;\to\; \frac{1}{\lvert u\rvert_1} \, \, \text{ as }k\to \infty,
\] it follows that the sequence $s(u_k,v_k)$ is also bounded both from below and above, i.e., there exists $0<c<C<\infty$ such that  $s(u_k,v_k)\in [c,C]$ for all $k\in \mathbb{N}$.  In order to prove that $s(u_k,v_k)\to s(u,v)$ as $k\to \infty$, it suffices to show that any convergent subsequence converges to $s(u,v)$ (see, e.g., \cite[Lemma 1.1]{dret}). Thus, let $s(u_{k_q},v_{k_q})$ be a subsequence convergent to some $s_\ast\in [c,C]$. Then, by the definition of the mapping $s$, we have
\[
0 = \mathcal{F}\left( T_1\left(s(u_{k_q},v_{k_q})u_{k_q},v_{k_q}\right), s\bigl(u_{k_{q}},\,v_{k_{q}}\bigr)u_{k_q}\right).
\]
Now, passing to the limit, one obtains that \begin{equation*}
    \mathcal{F}\left( T_1(s_\ast u,v),s_\ast u\right)=0.
\end{equation*}
From \(\text{(h1)}\), there exists a unique \(s(u, v) > 0\) such that
\[
\bigl(\mathcal{F}\bigl(s(u,v)\,u,\,v\bigr),\,s(u,v)u\bigr)\;=\;0,
\]
so \(s_\ast=s(u,v)\) as desired.  This completes the proof, since the choice of \((u, v)\) and the sequence \((u_k, v_k)\) was arbitrary.
\end{proof}
In the subsequent, 
 we consider the sets \begin{equation*}
     U_b=\bigl\{\left(  u,v\right)\in (K_1\setminus \{0\})\times D \, : \, s(u,v)=1 \bigl\}
 \end{equation*}
 and
\begin{equation*}
U=\bigl\{\left( \lambda u,v\right) :0\leq \lambda <1 \,\, \text{ and }\,\, (u,v)\in U_b\bigl \}.
\end{equation*}
\begin{rem}\label{remarca_echivalenta_cu_nehari}
A simple observation based on assumption $\text{(h1)}$  allows us to establish an equivalent characterization of the set \(U_b\), namely,
 \begin{equation*}
          U_b=\left\{\left(  u,v\right)\in (K_1\setminus \{0\})\times D \, : \,  \mathcal{F}\Bigl( T_1\bigl( u,\, v \bigr),\, u \Bigr) = 0 \right \}.
    \end{equation*}
\end{rem}
\begin{lem}\label{U open}
    The set $U$ is open in the relative topology of $K_{1}\times D.$ Moreover, it is a bounded set.
\end{lem}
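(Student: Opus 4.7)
The plan is to first obtain a concise, useful characterization of the set $U$ via the homogeneity identity $s(tu,v)=\tfrac{1}{t}s(u,v)$ already established in the previous proposition, and then to combine this with the continuity of $s$ and the uniform bounds in (h2).

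\medskip
\textbf{Step 1: Alternative description of $U$.} I will show that
\[
U \;=\; \bigl\{(w,v)\in K_1\times D \;:\; w=0 \text{ or } s(w,v)>1\bigr\}.
\]
The inclusion ``$\subseteq$'' follows immediately from the homogeneity relation: if $w=\lambda u$ with $0<\lambda<1$ and $(u,v)\in U_b$, then $s(w,v)=\tfrac{1}{\lambda}s(u,v)=\tfrac{1}{\lambda}>1$; the case $\lambda=0$ gives $w=0$. Conversely, if $s(w,v)>1$, setting $u:=s(w,v)\,w$ yields $s(u,v)=1$ by homogeneity and $w=\tfrac{1}{s(w,v)}u$ with $\tfrac{1}{s(w,v)}\in(0,1)$, so $(w,v)\in U$.

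\medskip
\textbf{Step 2: Openness.} Take $(w,v)\in U$. If $w\neq 0$, then $s(w,v)>1$. Continuity of $s$ on $(K_1\setminus\{0\})\times D$, granted by the previous proposition, produces a relative neighborhood of $(w,v)$ in which $w'\neq 0$ and $s(w',v')>1$; by Step 1, this neighborhood lies in $U$. The delicate case is $w=0$: here $s$ is not even defined at $(0,v)$, so I cannot use continuity of $s$ directly. Instead, I exploit the uniform upper bound in (h2) together with homogeneity. For any $(w',v')\in K_1\times D$ with $w'\neq 0$,
\[
s(w',v')\;=\;\frac{1}{|w'|_1}\,s\!\left(\frac{w'}{|w'|_1},v'\right)\;>\;\frac{m}{|w'|_1}.
\]
Hence, on the relative neighborhood $\{(w',v')\in K_1\times D:|w'|_1<m\}$ of $(0,v)$, every nonzero $w'$ satisfies $s(w',v')>1$, while $w'=0$ already belongs to $U$. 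This shows $U$ is open in $K_1\times D$.

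\medskip
\textbf{Step 3: Boundedness.} Again by homogeneity and the upper bound in (h2), whenever $(w,v)\in U$ with $w\neq 0$,
\[
1\;<\;s(w,v)\;=\;\frac{1}{|w|_1}\,s\!\left(\frac{w}{|w|_1},v\right)\;<\;\frac{M}{|w|_1},
\]
which forces $|w|_1<M$. Since $D$ is bounded by hypothesis and the origin trivially satisfies any bound, $U$ is bounded in $K_1\times D$.

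\medskip
The main obstacle is handling the boundary point $w=0$ in the openness argument, where continuity of $s$ is unavailable; the uniform positive lower bound $m$ in (h2) is exactly what is needed to overcome this. The rest is straightforward bookkeeping using homogeneity.
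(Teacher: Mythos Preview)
Your proof is correct and uses the same essential ingredients as the paper (the homogeneity identity, continuity of $s$, and the bounds in (h2)), but the organization is genuinely different. The paper argues that the complement $(K_1\times D)\setminus U$ is closed via sequences: given $(u_k,v_k)\notin U$ converging to $(u,v)$, it shows $s(u_k,v_k)\le 1$, invokes a separately stated auxiliary proposition (``if $u_k\to 0$ then $s(u_k,v_k)\to\infty$'') to rule out $u=0$, and then passes to the limit. Your Step~1 characterization $U=\{w=0\text{ or }s(w,v)>1\}$ replaces this with a direct openness argument, and your treatment of the origin in Step~2 --- the uniform estimate $s(w',v')>m/|w'|_1$ --- accomplishes in one line what the paper packages as a standalone proposition. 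This is cleaner and makes the role of the lower bound $m$ more transparent. One small point: in the reverse inclusion of Step~1 (and again in Step~2 when you write ``$w'=0$ already belongs to $U$'') you are implicitly using $\{0\}\times D\subset U$, which deserves a sentence of justification (for any $v\in D$, pick $u'\in K_1\setminus\{0\}$ and note $(s(u',v)u',v)\in U_b$, so $(0,v)=(0\cdot s(u',v)u',v)\in U$); the paper uses this fact too, also without proof. Your boundedness argument in Step~3 is essentially identical to the paper's.
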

Before proving this, we need the following auxiliary result. \begin{prop}\label{lema_conv_zero}
 Let $(u_k,v_k) \in \left( K_1\setminus\{0\}\right)\times D$ be a sequence such that \begin{equation*}
     (u_k,v_k)\to (0,v) \,\, \text{ as }k\to \infty,
 \end{equation*} for some $v\in D$. Then, \begin{equation*}
     s(u_k,v_k)\to \infty \,\,\,  \text{ as } k\to \infty.
 \end{equation*}  
\end{prop}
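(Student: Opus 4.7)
The plan is to reduce this to a direct application of the homogeneity identity \eqref{prop_s_homogeneity} together with the lower bound furnished by hypothesis \text{(h2)}. The intuition is that since $s$ scales like $1/t$ under $u \mapsto tu$, shrinking $u_k$ to zero forces $s(u_k,v_k)$ to blow up, provided we can control $s$ on the unit sphere of the cone uniformly from below — which is exactly what \text{(h2)} guarantees.

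First I would normalize: set $t_k := |u_k|_1 > 0$ (well-defined because $u_k \neq 0$) and $w_k := u_k / t_k \in S_{K_1}^{1}$. Applying the homogeneity relation \eqref{prop_s_homogeneity} with $t = t_k$ and $u = w_k$ gives
\[
s(u_k,v_k) \;=\; s(t_k w_k, v_k) \;=\; \frac{1}{t_k}\, s(w_k, v_k).
\]
Since $(w_k, v_k) \in S_{K_1}^{1} \times D$, hypothesis \text{(h2)} yields $s(w_k, v_k) > m$ for every $k$. Therefore
\[
s(u_k, v_k) \;\geq\; \frac{m}{t_k} \;=\; \frac{m}{|u_k|_1}.
\]

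Finally I would use the assumption $(u_k,v_k) \to (0,v)$, which forces $t_k = |u_k|_1 \to 0$, so $1/t_k \to \infty$ and hence $s(u_k, v_k) \to \infty$, completing the argument. There is no real obstacle here: the whole content of the proposition is packaged in the interplay between the homogeneity formula \eqref{prop_s_homogeneity} and the uniform lower bound on the unit sphere from \text{(h2)}; the upper bound $M$ in \text{(h2)} plays no role in this particular statement.
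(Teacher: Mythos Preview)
Your proof is correct and follows essentially the same route as the paper: normalize $u_k$ to the unit sphere, invoke the homogeneity identity \eqref{prop_s_homogeneity}, apply the uniform lower bound from \text{(h2)} to obtain $s(u_k,v_k)\ge m/|u_k|_1$, and conclude from $|u_k|_1\to 0$. Your added remark that the upper bound $M$ is irrelevant here is accurate.
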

\begin{proof}
Since $u_k\neq 0$,  by assumption $\text{(h2)}$, one has \begin{equation*}
        s\left( \frac{u_k}{|u_k|_1},v_k\right)\geq m  \,\,\, \text{ for all }k\in \mathbb{N}.
    \end{equation*}
   Using  \eqref{prop_s_homogeneity} we see that \begin{equation*}
        s\left( u_k,v_k\right)=s\left(|u_k|_1 \frac{u_k}{|u_k|_1},v_k\right)=\frac{1}{|u_k|_1}s\left( \frac{u_k}{|u_k|_1},v_k\right),
    \end{equation*}
    which implies \begin{equation*}
       s\left( u_k,v_k\right)\geq  \frac{m}{|u_k|_1}.
    \end{equation*}
    Finally, since $u_k\to 0$ as $k\to \infty$, the desired conclusion follows immediately.
\end{proof}
Now, we continue with the proof of Lemma \ref{U open} stated above.
\begin{proof}[Proof of Lemma \ref{U open}]
To prove that $U$ is open in the
relative topology of $K_{1}\times D$, it suffices to show that the set
 $(K_{1}\times D) \setminus U$ is closed. To this aim, let $(u_k, v_k)\in (K_{1}\times D) \setminus U$ be any sequence convergent to some $(u,v)\in K_1\times D$. We need to show that $(u,v)\notin U$.

 First, we claim that \begin{equation}\label{element in Ub}
     \left(s\left(u_k,v_k\right)u_k,v_k\right)\in U_b.
 \end{equation}
Observe that, since $\{0\}\times D\subset U$ and $(u_k,v_k)\notin U$, we have $u_k\neq 0$. Now, using relation \eqref{prop_s_homogeneity}, we have \begin{equation*}
    s\left(s\left(u_k,v_k\right)u_k,v_k \right)=\frac{1}{s\left(u_k,v_k\right)}s\left(u_k,v_k\right)=1,
\end{equation*} 
 whence our claim is verified.  
 
Next, write
\begin{equation}\label{u_k v_k}
    (u_k, v_k) = \left( \frac{1}{s(u_k, v_k)} s(u_k, v_k) u_k, v_k \right).
\end{equation}
Since \( (u_k, v_k) \in (K_1 \times D) \setminus U \), by \eqref{element in Ub} it follows that 
\begin{equation}\label{inegalitate_s}
    \frac{1}{s(u_k, v_k)} \geq 1.
\end{equation}This inequality implies that the limit of the sequence \(u_k\) is nonzero, i.e., \(u \neq 0\). Indeed, if this were not the case, that is, if \(u = 0\), then by Proposition~\ref{lema_conv_zero} we would have
\[
s(u_k, v_k)\,\to\, \infty
 \,\, \text{ as } k \to \infty,
\]
which yields a contradiction with \eqref{inegalitate_s}.

The final step in our proof is to pass to the limit in \eqref{inegalitate_s}. To see why, note that $s(u_k,v_k)\to s(u,v)$. Also, by relation \eqref{inegalitate_s}, we have \begin{equation*}
    \frac{1}{s(u,v)}\geq 1.
\end{equation*}
Moreover, since $\left( s(u,v)u,v\right)\in U_b$, from the definition of $U$ we conclude that \begin{equation*}
   (u,v)= \left( \frac{1}{s(u,v)}s(u,v) u,v\right)\notin U,
\end{equation*}
as desired.

To show the second part, observe that we are concerned with the boundedness only with respect to the first component, since by definition, $D$ is a bounded set. 

Suppose now that there exists a sequence $(u_k,v_k)\in U_b$ such that $|u_k|_1\to \infty$. Then, using   \begin{equation*}
    1=s(u_k,v_k)=\frac{1}{|u_k|_1} s\left( \frac{u_k}{|u_k|_1},v_k\right)\leq \frac{1}{|u_k|_1} M,
\end{equation*}
by letting $k\to \infty$ we arrive at a contradiction, so the set $U_b$ is bounded. From this, the boundedness of $U$ follows immediately.
\end{proof}\begin{rem}
    The boundary of $U$ relative to $K_1\times D$ is  $U_b,$ i.e., $$\partial U=U_b.$$
\end{rem}
We are ready now to state the first main result of our paper.
\begin{thm}\label{thm principala}
Assume that conditions $\emph{(h1)-(h3)}$ are satisfied. Then, the operator \((T_1, T_2)\) admits a fixed point  \( (u,v)\in K_1 \times D\). If in addition the operator \((T_1, T_2)\) has no fixed point of the form \((0, \bar{v})\), with \(\bar{v} \in D\), then \((u,v)\in U_b\).
\end{thm}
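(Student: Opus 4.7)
The plan is to use the fixed point index on the relatively open bounded set $U\subset C:=K_1\times D$ (open by Lemma \ref{U open}, with $\partial U=U_b$ in the relative topology of $C$). I will split into two cases according to whether $T$ has a fixed point on $U_b$ or not, then use the uniqueness property (h1) together with \eqref{proprietate F} to rule out fixed points on $\partial U$ along a suitably chosen homotopy.

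First I would dispense with the trivial case: if $T$ has a fixed point $(u,v)\in U_b$, then by definition of $U_b$ one has $u\neq 0$, and the assertion is proved. Thus I may assume $T$ has \emph{no} fixed point on $U_b$. I then consider the homotopy
\[
H:\overline{U}\times[0,1]\to C,\qquad H(u,v,t)=\bigl(t\,T_1(u,v),\,T_2(u,v)\bigr),
\]
which is well defined since $K_1$ is a cone and $T_2(u,v)\in D$, and is compact because $T$ is completely continuous. The core verification is that $H$ has no fixed points on $\partial U=U_b$ for any $t\in[0,1]$. For $(u,v)\in U_b$ we have $u\neq 0$, so $t=0$ is excluded; for $t\in(0,1]$, the equation $tT_1(u,v)=u$ gives $T_1(u,v)=u/t$, hence
\[
0=\mathcal{F}\bigl(T_1(u,v),u\bigr)=\mathcal{F}(u/t,u),
\]
and the defining property \eqref{proprietate F} of $\mathcal{F}$ forces $1/t=1$, i.e.\ $t=1$; but at $t=1$, this would mean $T(u,v)=(u,v)$ on $U_b$, contradicting the standing assumption. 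This is the one step that relies in an essential way on the Nehari-type hypothesis; the rest is routine index bookkeeping.

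By homotopy invariance, $\operatorname{ind}_{C}(T,U)=\operatorname{ind}_{C}(H_0,U)$ where $H_0(u,v)=(0,T_2(u,v))$. Since $\{0\}\times D\subset U$ (taking $\lambda=0$ in the definition of $U$), a second homotopy $G(u,v,s)=(0,sT_2(u,v)+(1-s)v_0)$, for any fixed $v_0\in D$, again has no fixed points on $U_b$ (every fixed point would satisfy $u=0$). By normalization,
\[
\operatorname{ind}_{C}(H_0,U)=\operatorname{ind}_{C}((0,v_0),U)=1,
\]
since $(0,v_0)\in U$. Therefore $\operatorname{ind}_{C}(T,U)=1\neq 0$ and the existence property of the index yields a fixed point $(u,v)\in U\subset K_1\times D$.

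It remains to identify the location of the fixed point under the extra hypothesis that $T$ has no fixed point of the form $(0,\bar v)$. Observe that if $T(u,v)=(u,v)$ with $u\neq 0$, then $T_1(u,v)=u$ yields $\mathcal{F}(T_1(u,v),u)=\mathcal{F}(u,u)=0$ by \eqref{proprietate F}, whence $(u,v)\in U_b$. Consequently, any fixed point in $\overline{U}$ either belongs to $U_b$ or has first component zero. The first (trivial) case of the dichotomy already places the fixed point on $U_b$; in the second (index) case, the fixed point found above lies in $U$, and if it were not of the form $(0,\bar v)$ it would have $u\neq 0$ and thus lie in $U_b$, a contradiction with $(u,v)\in U$. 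Hence, excluding fixed points of the form $(0,\bar v)$ forces the dichotomy into the $U_b$-branch, completing the proof. The main obstacle, as noted, is the verification that the rescaling homotopy avoids $U_b$, which is exactly where the characterization \eqref{proprietate F} does the work.
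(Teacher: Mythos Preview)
Your proof is correct and follows essentially the same approach as the paper: compute the fixed point index of $T$ over $U$ via a homotopy to a constant map, using property \eqref{proprietate F} to force any boundary fixed point along the homotopy to occur at $t=1$, and then observe that a nontrivial interior fixed point would satisfy $\mathcal{F}(u,u)=0$ and hence lie on $U_b$. The only difference is organizational: the paper uses a single homotopy $H((u,v),t)=(tT_1(u,v),\,tT_2(u,v)+(1-t)\omega)$ and splits into cases according to whether $H$ meets $\partial U$, whereas you first dispose of the case of a fixed point on $U_b$ and then run two successive homotopies (first in the $K_1$-component, then in the $D$-component).
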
 \begin{proof}

Let $\omega\in D$ and consider the homotopy
$H\colon \overline{U}\times [0,1]\to K_1\times D,$
\[
H\bigl((u,v),\,t\bigr) = \Bigl(\,t\,T_{1}(u,v),\; t\,T_{2}(u,v)\;+\;\bigl(1 - t\bigr)\,\omega\Bigr).
\] Clearly, $H$ is a compact mapping.
We distinguish two possible cases: \begin{itemize}
    \item[(a)] The homotopy $H$ has no fixed points on $\partial U$, i.e.,
    \begin{equation*}
        0\notin \bigl(I - H\bigr)\bigl(\,\partial U\times [0,1]\bigr),
    \end{equation*}
    or
    \item[(b)] There exists $(u,v)\in \partial U$ and $t\in [0,1]$ such that \begin{equation}\label{fixed point H}
       (u,v)=H((u,v),t).
    \end{equation}
 \end{itemize}
In case (a), we show that \((T_1, T_2)\) has only fixed points of the form \((0, v)\) with \(v \in D\). To prove the existence of such a fixed point, we use the homotopy invariance of the fixed point index, which ensures that
 \begin{equation*}
     \mathrm{ind}_{K_1\times D}((T_1,T_2),U)=\mathrm{ind}_{K_1\times D}(H(\cdot,1),U)=\mathrm{ind}_{K_1\times D}(H(\cdot,0),U).
 \end{equation*} Since $H(\cdot,0)=(0,\omega)\in U$, the normalization property of the fixed point index yields \begin{equation*}
   \mathrm{ind}_{K_1\times D}((T_1,T_2),U)  =\mathrm{ind}_{K_1\times D}((0,\omega),U)=1.
 \end{equation*} Therefore, the mapping $(T_1,T_2)$ has a fixed point in $U$, i.e., there exists  $(u,v)\in U$ such that\begin{equation*}
     u=T_1(u,v) \quad \text{ and }\quad v=T_2(u,v).
 \end{equation*} 
Assume that \(u \neq 0\). Then, using property \eqref{proprietate F}, we observe that
\[
\mathcal{F}\bigl(T_1(u,v),\, u \bigr) = \mathcal{F}\bigl(u,\, u\bigr) = 0.
\]
This  implies that $s(u,v)=1$, so $(u,v)\in U_b=\partial U$. However, this  leads to a contradiction since  $(u,v)\in U$ and $U$ is open. Therefore, $u=0$, so $(0,v)$ is a fixed point for $(T_1,T_2)$.

In the case (b), relation~\eqref{fixed point H} is equivalent to
\begin{equation}\label{fixed point H 2}
    u = t\, T_1(u,v) \quad \text{and} \quad v = t\, T_2(u,v) + (1 - t)\,\omega.
\end{equation}
If \(t = 0\), then \(u = 0\), which contradicts the assumption that \((u,v)\in \partial U\), since \((0,v)\in U\) and \(U\) is open.
If \(t \in (0,1]\), then from~\eqref{fixed point H 2} we have
\begin{equation}\label{T_1 eigenvalue}
    T_1(u,v) = \frac{1}{t}\,u.
\end{equation}
Given that \((u,v)\in U_b = \partial U\), and using~\eqref{T_1 eigenvalue}, by the definition of the set \(U_b\), it follows that
\[
    0 = \mathcal{F}\bigl( T_1(u,v),\,u \bigr) = \mathcal{F}\bigl( \tfrac{1}{t}\,u,\,u \bigr).
\]
Note that property~\eqref{proprietate F} implies  \(\tfrac{1}{t} = 1\), which holds only if \(t = 1\).
In this case, relation~\eqref{fixed point H 2} shows that \((u,v)\) is a fixed point of the operator \(\bigl(T_1,\,T_2\bigr)\).
\end{proof}
\subsection{Nehari type fixed point theorem}

Consider now a single fixed point equation
\begin{equation}
    T(u) = u, \label{epf}
\end{equation}
in a cone \( K \) of a Banach space \( (X, |\cdot|) \). Let us take \( X_1 := X \), \( K_1 := K \), and let \( D \) be a singleton, that is, \( D = \{v_0\} \) for some fixed \( v_0 \in X \). 
By identifying
\[
T_1(u, v_0) := T(u), \qquad T_2(u, v_0) := v_0,
\]
we recover the fixed point problem \((T_1(u, v), T_2(u, v)) = (u, v)\) in the special case where the second component remains constant. This yields the following fixed point principle.

Assume that $T:K\rightarrow K$ is completely continuous, $\mathcal{F}%
:K\times K\rightarrow 
\mathbb{R}
$ is any continuous functional satisfying (\ref{proprietate F}), and consider the following conditions:
\begin{description}
\item[(c1)]  For every $u\in K\setminus \{0\}$, there exists a unique number $s(u)>0$ such that 
\begin{equation*}
\mathcal{F}\Bigl(T\bigl(s(u)\,u\bigr),\,s(u)\,u\Bigr)=0.
\end{equation*}

\item[(c2)] Denoting $S_{K}^{1}:=\left\{ u\in K:\ \left\vert u\right\vert
=1\right\} ,$ there exists $0<m<M<\infty$ such that  
\begin{equation*}
\inf_{S_{K}^{1}}s(\cdot )>m
\end{equation*}
and
\begin{equation*}
\sup_{S_{K}^{1}}s\left( \cdot \right) <M .
\end{equation*}
\end{description}

\begin{cor}
Assume conditions \emph{(c1)-(c2)} hold. Then, the operator $T$ has a fixed
point $u\in $ $K$. In addition, if $\, 0$ is not a solution of (\ref{epf}), then 
$s\left( u\right) =1.$
\end{cor}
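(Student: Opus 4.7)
The plan is to obtain this corollary as a direct specialization of Theorem \ref{thm principala}. First, I would verify that the construction with $X_1 := X$, $K_1 := K$, $D := \{v_0\}$, $T_1(u,v_0) := T(u)$, $T_2(u,v_0) := v_0$ fits the setting of Section 2.1: $K_1$ is a non-degenerate cone by assumption on $K$, and $D = \{v_0\}$ is trivially nonempty, bounded, closed, and convex. The mapping $T = (T_1,T_2)\colon K_1\times D\to K_1\times D$ is completely continuous because the first component inherits this property from $T$ and the second component is constant.

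Next I would translate conditions (c1)-(c2) into (h1)-(h2). Since $D$ is a singleton, for each $(u,v) \in (K_1\setminus\{0\})\times D$ we have $v = v_0$, so (c1) provides exactly the unique $s(u) > 0$ required by (h1), namely $s(u,v_0) := s(u)$; uniqueness and the relation $\mathcal{F}(T_1(s(u,v_0)u,v_0), s(u,v_0)u) = \mathcal{F}(T(s(u)u), s(u)u) = 0$ are immediate. Likewise, (c2) gives the required bounds on $s(\cdot,\cdot)$ over $S_{K_1}^1 \times D$, which is canonically identified with $S_K^1$. The continuity assumption on $\mathcal{F}$ and property \eqref{proprietate F} are assumed directly.

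With these verifications in hand, I would invoke Theorem \ref{thm principala} to obtain a fixed point $(u,v_0)\in K_1\times D$ of $(T_1,T_2)$. By construction, this yields $T(u) = u$, establishing the first conclusion.

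For the second conclusion, I would note that the hypothesis ``$0$ is not a solution of \eqref{epf}'' means $T(0) \neq 0$, which is equivalent to the statement that $(T_1,T_2)$ has no fixed point of the form $(0,\bar v)$ with $\bar v \in D$, since the only possibility for $\bar v$ is $v_0$ and then the equation reduces to $T(0) = 0$. Theorem \ref{thm principala} then yields $(u,v_0)\in U_b$, i.e., $s(u,v_0) = 1$, which is precisely $s(u) = 1$. I do not anticipate any real obstacle: the argument is essentially bookkeeping, the only subtle point being the translation between ``no fixed point of the form $(0,\bar v)$'' and ``$0$ is not a solution of \eqref{epf}'', which is transparent because $D$ collapses to a single point.
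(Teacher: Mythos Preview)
Your proposal is correct and matches the paper's approach exactly: the corollary is stated without proof in the paper, as an immediate specialization of Theorem~\ref{thm principala} via the construction $X_1:=X$, $K_1:=K$, $D:=\{v_0\}$, $T_1(u,v_0):=T(u)$, $T_2(u,v_0):=v_0$ spelled out just before it. Your bookkeeping---in particular the translation of (c1)--(c2) into (h1)--(h2) and of ``$0$ is not a solution'' into ``no fixed point of the form $(0,\bar v)$''---is precisely what the paper leaves implicit.
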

Remaining in the setting of a single equation, we now let \( X := H \) be a Hilbert space endowed with the inner product \( (\cdot, \cdot)_H \), and identified with its dual. Consider a $C^1$ functional
\[
E \colon H \to \mathbb{R},
\]
whose Fréchet derivative is denoted by \( E' \colon H \to H \). In this framework,  define the operator
\begin{equation}\label{T particular}
    T(u) = u - E'(u),
\end{equation}
and assume that \(T\) is completely continuous and satisfies the invariance condition
\[
T(K) \subset K.
\]
Further, considering the functional \(\mathcal{F} \colon K \times K \to \mathbb{R}\) given by
\begin{equation}\label{F particular}
    \mathcal{F}(\tilde{u}, u) := (u - \tilde{u}, u)_H,
\end{equation}
 we recover the classical Nehari manifold \(\mathcal{N}_K\) defined by \begin{equation}\label{N_k}
    \mathcal{N}_K=\left\{u\in K\setminus \{0\}\, : \, \left( E'(u),u\right)_H=0\right\}.
\end{equation}
Then, for any \(u \in K\), we have
\[
\mathcal{F}(T(u), u) = (u - T(u), u)_1 = (E'(u), u)_H.
\]
Therefore, based on Remark \ref{remarca_echivalenta_cu_nehari}, the set \(U_b\) becomes 
\[
U_b = \left\{ u \in K \setminus \{0\} \;:\; (E'(u), u)_H = 0 \right\},
\]
i.e., $U_b=\mathcal{N}_K$.

\begin{rem}
Returning to the general case of a Banach space \((X, |\cdot|_X)\) endowed with the cone $K$, and inspired by the construction of the Nehari manifold, one may consider the following example of a functional \(\mathcal{F}\). Assume that \(X\) is continuously embedded into a Hilbert space \((H, (\cdot, \cdot)_H)\). Then, \(\mathcal{F}\) can be defined by
\[
\mathcal{F}(\tilde{u}, u) := (u - \tilde{u}, u)_H \quad (u, \tilde{u}\in K).
\]


\end{rem}


\subsection{Nehari-Schaefer fixed point theorem}

The previous result was established under the invariance condition on \(T_2\),
namely,
\[
T_2\bigl(K_1 \times D\bigr) \,\subset\, D,
\]
where \(D\) is a given nonempty, bounded, closed, and convex set, as in Schauder's fixed point theorem. The next result does not require such an invariance condition.

Under the above notations, assume that, instead of \(D\), we consider a closed ball \(B_R\subset X_2\), centered at the origin and of radius \(R>0\). Assume further that the operators
\[
T_1 \colon K_1 \times X_2 \to K_1,
\quad
T_2 \colon K_1 \times B_R \to X_2,
\]
are completely continuous, and that the following conditions are satisfied:
\begin{description}
\item[(a1)] The set $T_{2}\left( K_{1}\times B_{R}\right) $ is bounded in $%
X_{2}.$

\item[(a2)] For every $(u,v)\in \left( K_{1}\setminus \{0\}\right) \times
B_{R}$, there exists a unique number $s(u,v)>0$ such that 
\begin{equation*}
\mathcal{F}\Bigl(T_{1}\bigl(s(u,v)\,u,\,v\bigr),\,s(u,v)\,u\Bigr)=0.
\end{equation*}

\item[(a3)] 

There exists $0<m<M<\infty$ such that 
\[
\inf_{S_{K_1}^{1}\times D} s(\cdot, \cdot) > m,
\]
and \begin{equation*}
\sup_{S_{K_1}^{1}\times D}s\left( \cdot ,\cdot \right) <M.
\end{equation*}
\end{description}

\begin{thm}
\label{t2} Assume conditions \emph{(a1)-(a3)} hold. In addition, assume that %
\begin{equation*}
v\neq \lambda T_{2}\left( u,v\right) \ \ \ \text{for all }u\in K_1, v\in \partial
B_{R},\text{ and } \lambda \in \left( 0,1\right) .
\end{equation*}%
Then, the operator $(T_{1},T_{2})$ has a fixed point in $K_{1}\times B_{R}.$
\end{thm}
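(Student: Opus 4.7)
The plan is to reduce the problem to the situation already handled by Theorem \ref{thm principala} via a radial retraction onto $B_R$, and then use the Leray--Schauder type boundary condition on $T_2$ to exclude any fixed point lying outside $B_R$.

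By (a1), the set $T_2(K_1 \times B_R)$ is bounded, so one may choose $R' > R$ such that $T_2(K_1 \times B_R) \subset B_{R'}$. Let $\rho : X_2 \to B_R$ denote the continuous radial retraction given by $\rho(v) = v$ for $|v|_2 \leq R$ and $\rho(v) = R v / |v|_2$ otherwise. Define modified operators
\[
\hat T_1(u,v) := T_1(u, \rho(v)), \qquad \hat T_2(u,v) := T_2(u, \rho(v)),
\]
which are completely continuous as compositions with $\rho$, and which, since $\rho(B_{R'}) = B_R$ and $T_2(K_1 \times B_R) \subset B_{R'}$, together map $K_1 \times B_{R'}$ into itself. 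Because $\hat T_1(su, v)$ depends on $v$ only through $\rho(v) \in B_R$, conditions (a2)--(a3) transfer directly to the new system as (h1)--(h2) of Theorem \ref{thm principala}, with $\hat s(u,v) := s(u, \rho(v))$. Applying that theorem on $K_1 \times B_{R'}$ yields a fixed point $(u,v)$ of $(\hat T_1, \hat T_2)$.

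It then remains to verify that $|v|_2 \leq R$, so that $\rho(v) = v$ and $(u,v)$ is an honest fixed point of $(T_1, T_2)$ lying in $K_1 \times B_R$. Suppose, for contradiction, that $|v|_2 > R$. Setting $\bar v := \rho(v) = R v/|v|_2 \in \partial B_R$ and $\lambda := R/|v|_2 \in (0,1)$, the identity $v = T_2(u, \bar v)$ gives
\[
\bar v \;=\; \frac{R}{|v|_2}\, v \;=\; \lambda\, T_2(u, \bar v),
\]
which contradicts the Leray--Schauder hypothesis. Hence $|v|_2 \leq R$ and $(u,v)$ is the desired fixed point.

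The main technical subtlety lies in the choice of the modified operators $\hat T_i$: one must precompose with $\rho$ so that hypotheses (h1)--(h2), which are only assumed on $B_R$, remain valid on the enlarged target ball $B_{R'}$ that the retraction trick forces us to use. Once this is arranged, the argument is a clean splicing of the Nehari--Schauder result of Theorem \ref{thm principala} with the standard Schaefer-type exclusion of a priori bounds via the boundary condition.
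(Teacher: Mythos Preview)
Your proof is correct and follows essentially the same route as the paper: enlarge the ball using (a1), precompose both operators with the radial retraction onto $B_R$ so that (a2)--(a3) become (h1)--(h2) for the extended system, apply Theorem~\ref{thm principala}, and rule out $|v|_2>R$ via the Leray--Schauder boundary condition. The only difference is notational---you write the extension explicitly through the retraction $\rho$, whereas the paper defines $\widetilde T_i$ piecewise---but the argument is the same.
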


\begin{proof}
We reduce the problem to that one from Section 2.1. To this aim,
we let%
\begin{equation*}
D:=B_{\widetilde{R}},
\end{equation*}%
where $\widetilde{R}\geq R$ is such that $$T_{2}\left( K_{1}\times
B_{R}\right) \subset B_{\widetilde{R}}.$$ Such a ball exists based on
assumption (a1). 
Next, we extend the operators \(T_1\) and \(T_2\) from \(K_1 \times B_R\) to  
\(\,K_1 \times B_{\widetilde{R}}\,\) by defining
\[
\widetilde{T}_1 \colon K_1 \times B_{\widetilde{R}} \to K_1 \quad \text{and} \quad \widetilde{T}_2 \colon K_1 \times B_{\widetilde{R}} \to B_{\widetilde{R}},
\]
as follows
\[
\widetilde{T}_i(u, v) = 
\begin{cases}
T_i(u, v), & \text{if } 0 \leq |v|_2 \leq R, \\
T_i\left(u, \frac{R}{|v|_2} v \right), & \text{if } R < |v|_2 \leq \widetilde{R},
\end{cases}
\quad \text{for } i = 1,2.
\]
Letting   $\widetilde{T}:=\left( \widetilde{T}_{1},%
\widetilde{T}_{2}\right) ,$ we aim to apply
 Theorem \ref{thm principala}. 
 First, note that the operator $\widetilde{T}$ is completely continuous and invariant with respect to  $K_1 \times B_{\widetilde{R}}$. Next, we show that the mapping
 \begin{equation*}
     \widetilde{s}\colon \left( K_{1}\setminus
\{0\}\right) \times B_{\widetilde{R}}\to (0,\infty),\quad \widetilde{s}\left( u,v\right) =\left\{ 
\begin{array}{lll}
s\left( u,v\right)  & \text{if} & \left\vert v\right\vert _{2}\leq R \\ 
s\left( u,\frac{R}{\left\vert v\right\vert_2 }v\right)  & \text{if} & 
R<\left\vert v\right\vert _{2}\leq \widetilde{R},
\end{array}%
\right. 
 \end{equation*}
satisfies (h1). Let \( u \in K_1 \setminus \{0\} \). If \( v \in B_R \), the statement follows directly from assumption (a2). Now, consider \( v \in B_{\widetilde{R}} \setminus B_R \), and define \( w := \frac{R}{|v|_2} v \in B_R \). Then, we have
\begin{align*}
\mathcal{F}\left( \widetilde{T}_1\left( \tilde{s}(u, v)\, u, v \right),\, \tilde{s}(u, v)\, u \right)
&= \mathcal{F}\left( T_1\left( s(u, w)\, u, w \right),\, s(u, w)\, u \right) = 0,
\end{align*}
by assumption (a2) and the fact that \( w \in B_R \). Clearly, assumption (h2) and (h3) also are satisfied.
 Thus, Theorem \ref{thm principala} applies and guarantees the
existence of a fixed point $\left( u,v\right) $ of $\widetilde{T},$ with $%
u\in K_{1}$ and $v\in B_{\widetilde{R}}.$ It remains to show that in fact $%
v\in B_{R}.$ Assume, by contradiction, that \(v \in B_{\widetilde{R}} \setminus B_R\). Then, by the definition of \(\widetilde{T}_2\), we have  
\[
v=\widetilde{T}_2(u, v) = T_2\left(u, \frac{R}{|v|_2} v \right).
\]
Denote \( w := \frac{R}{|v|_2} v \) and \( \lambda := \frac{R}{|v|_2} \). Then,
\[
w \in \partial B_R, \quad \lambda \in (0,1), \quad \text{and} \quad \lambda\, T_2(u, w) = w,
\]
which contradicts assumption (a1). Hence, \(v \in B_R\), and therefore
\[
 (u, v)=\widetilde{T}(u, v) = T(u, v) ,
\]
which completes the proof.

\end{proof}


\section{Application}

We now illustrate the applicability of Theorem \ref{thm principala} by considering the system \begin{equation}\label{pb aplicatie}
    \begin{cases}
        u(t)=\int_0^1 k_1(t,\theta) f(u(\theta), v(\theta))ds:=T_1(u,v)(t)\\
        v(t)=\int_0^1 k_2(t,\theta) g(u(\theta),v(\theta))ds:=T_2(u,v)(t),
    \end{cases}
\end{equation}
where \(k_1,k_2\) are nonnegative continuous functions on \([0,1]^2\);   $f,g\in C\left(\mathbb{R}^2,\mathbb{R} \right)$ and $f(x,y)\geq 0$ for all \(x\geq 0\) and  $y\in \mathbb{R}$.  Related to the kernel $k_1$,  assume that 
\begin{description}
    \item[(H1)] There exists a continuous function $\Phi\colon [0,1]\to \mathbb{R}_+$, an interval $[a,b]\subset [0,1]$ and a constant $c_1>0$  such that \begin{align}
     \label{inegalitate K dreapta}  & k_1(t,\theta)\leq \Phi(\theta) \quad \text{ for all }\, \,  t,\theta\in [0,1],\\&
       c_1\Phi(\theta)\leq k_1(t,\theta)\quad \text{ for all }\,\, t\in [a,b]\text{ and }\theta\in [0,1].  \label{inegalitate K stanga}
    \end{align} 
\end{description}
In order to apply Theorem~\ref{thm principala}, take
\[
X_1 = X_2 = C\bigl([0,1],\,\mathbb{R}\bigr),
\]
endowed with the supremum norm
\[
|u|_\infty = \max_{t\in [0,1]} \,|u(t)|.
\]
By standard arguments (see, e.g., \cite{infante}), both operators $T_1, T_2$ are completely continuous from $C\bigl([0,1],\,\mathbb{R}\bigr)^2 \to C\bigl([0,1],\,\mathbb{R}\bigr).$ 
Furthermore, in $C\bigl([0,1],\,\mathbb{R}\bigr)$ we consider the cone
\[
K_1 := \bigl\{ u \in C\bigl([0,1],\,\mathbb{R}\bigr)\, : \,u\geq 0\,\, \text{ and }\, \,\min_{t\in [a,b]} u(t)\,\ge c_1\,|u|_\infty \bigr\},
\]
and the bounded, closed, and convex set \begin{equation*}
    D=B_R=\bigl\{u\in C\bigl([0,1],\,\mathbb{R}\bigr)\, : \, |u|_\infty\leq R \bigl\},
\end{equation*} where $R>0$ is a given number. 
  The functional $\,\mathcal{F}\colon K_1\times K_1\to \mathbb{R}\,$ is chosen to be
\[
\mathcal{F}(u,\tilde{u}) = \int_0^1 \bigl(u(t)-\tilde{u}(t)\bigr)\,u(t)\,dt.
\]
Additionally, define
\[
\alpha_1 := \max_{t\in [0,1]} \Phi(t), \, \,\, \, \alpha_2 := \int_a^b \int_a^b k_1(t,\theta)\,d\theta\,dt
\,\,\, \,
\text{and}\, \,
\,\,
\alpha_3=\max_{t\in [0,1]}\int_0^1 
        k_2(t,\theta)d\theta.
\]
Related to the functions $f,g$, we assume that the following conditions are satisfied.\begin{description} 
    \item[(H2)] There exists a positive constant $\,c_2<\frac{1}{\alpha_1}$ such that \begin{equation*}
        \lim_{x\searrow 0} \frac{f(x,y)}{x}\leq c_2 \quad \text{ for all }\, \,|y|\leq R.
    \end{equation*}
     \item[(H3)]
     There exists \, $0<c_3\leq \infty$ 
     \,such that 
     \begin{equation}\label{liminf}
          \lim_{x\to \infty} \frac{f(x,y)}{x}\geq c_3 \quad \text{ for all }\, \,|y|\leq R,
     \end{equation}
    and \begin{equation}\label{cond_const_c_3}
        c_3 >\frac{1}{ c_1^2 \alpha_2}.
    \end{equation}
    \item[(H4)] For each $y$ with $|y|\leq R$, the mapping \begin{equation*}
        x\mapsto \frac{f(x,y)}{x}
    \end{equation*}
    is strictly increasing on $(0,\infty)$. 
  \item[(H5)]
   For every  $x\geq0$ and any $y$ with $|y|\leq R$, one has\begin{equation*}
        g(x,y)\leq \alpha,
    \end{equation*}
    where 
    \begin{equation*}
        \alpha <\frac{1}{\alpha_3}.
    \end{equation*}
\end{description}
From $\text{(H5)}$, one clearly has 
\[
T_{2}\bigl(K_{1} \times D\bigr) \;\subset\; D\,\, \, \,\, \,(D=B_R).
\]
Also, for any \(u\in K_{1}\) and  \(v\in D\), using  \eqref{inegalitate K dreapta} and \eqref{inegalitate K stanga}, we have
\begin{align*}
    T_{1}(u,v)(t)
    &= \int_{0}^{1} k_1(t,\theta)\,f\bigl(u(\theta),v(\theta)\bigr)d\theta \\
    &\geq c_{1}\int_{0}^{1} \Phi(\theta)\,f\bigl(u(\theta),v(\theta)\bigr)d\theta \\
    &\geq c_{1}\int_{0}^{1} k_1(t',\theta)\,f\bigl(u(\theta),v(\theta)\bigr)d\theta \\
    &= c_{1}\,T_{1}(u,v)(t')
\end{align*}
for all \(t \in [a,b]\) and $t'\in [0,1]$. Consequently, \begin{equation*}
    T_{1}\bigl(K_{1} \times B_{R}\bigr) \;\subset\; K_1.
\end{equation*}
Note that for any  $u\in K_1\setminus\{0\}$, $v\in D$ and $\sigma>0$, one may write
 \begin{align}\label{forma F}
    & \mathcal{F}\left( T_1(\sigma u,v),\sigma u\right)\\&=\int_0^1 T_1(\sigma u(t),v(t))\sigma u(t)dt-\sigma ^2\int_0^1 u^2 (t)dt
     \\&
     =\sigma ^2\left( \int_0^1 \left(\int_0^1 \frac{1}{\sigma }k_1(t,\theta) f(\sigma u(\theta),v(\theta)d\theta \right)u(t)dt-\int_0^1 u(t)^2dt\right). \nonumber
\end{align}
In the sequel, without further mention, we will use the   fact that whenever $v\in D$, one has
\[
|v(t)| \le R \,\, \text{ for all } t\in [0,1].
\]

\smallskip

\textbf{Check of condition (h2)}. 
 Let $u\in K_1$ with $|u|_\infty=1$, and $v\in D$. 
From assumption $\text{(H2)}$, there exists $\delta_0>0$ such that \begin{equation}\label{inegalitate f spre zero}
   f(x,y) \leq c_2x \,\, \text{ for all } \,\, 0\leq x<\delta_0 \text{ and} \,\,|y|\leq R.
\end{equation} Thus, for $0\leq \sigma<\delta_0$, using $\text{(h1)}$, \eqref{inegalitate f spre zero} and the Holder's inequality,  we estimate \begin{align*}
   \int_0^1\left(\int_0^1  \frac{1}{\sigma }k_1(t,\theta) f(\sigma u(\theta),v(\theta)d\theta \right)u(t)dt& \leq  \int_0^1 \Phi(\theta) u(\theta)^2 d\theta\int_0^1 u(t)dt\\&
   \leq c_2 \max_{[0,1]}\Phi(\cdot)\left( \int_0^1 u(t)dt\right)^2\\&
   \leq  \frac{c_2}{\alpha_1}\int_0^1 u^2(t)dt.
\end{align*}
Since $c_2<\frac{1}{\alpha_1}$, we see that
\begin{align*}
    \mathcal{F}\bigl( T_1(\sigma u,v),\,\sigma \,u \bigr)\leq \sigma ^2 \left( c_2 \frac{1}{\alpha_1}\Phi(\cdot)-1\right)\int_0^1 u^2(t)dt<0,
\end{align*}
for all \(0\leq \sigma <\delta_0.\) Consequently, if \(\sigma \) satisfies
\[
\mathcal{F}\bigl( T_1(\sigma u,v),\,\sigma \,u \bigr)=0,
\]
then necessarily \(\sigma \ge \delta_0.\) Since \(\delta_0\) is independent of the choice of \(u\) and \(v\), condition~$\text{(h2)}$ is satisfied.

To verify that $\sigma<M$ for some $M>0$ whenever $|u|_\infty=1$ and $|v|_\infty\leq R$, note that 
using condition \eqref{liminf} from $\text{(H3)}$, there exist constants $\sigma_0>0$ and $   c_4>\frac{1}{c_1^2 \alpha_2}$ such that 
 \begin{equation}\label{marginire inferioara f}
     f(\sigma x,y)\geq \sigma  c_4 x\,\, \text{ for all }\sigma \geq \sigma _0,\, x\geq c_1 \,\text{ and  }\,|y|\leq R.
 \end{equation} 
Let $t\in [0,1]$  and $\sigma \geq \sigma _0$. Using $\text{(H1)}$ and \eqref{marginire inferioara f}, we estimate
\begin{align*}
    \int_0^1 \frac{1}{\sigma }k_1(t,\theta) f(\sigma u(\theta),v(\theta)d\theta
    & 
    \geq \int _a^b \frac{1}{\sigma }k_1(t,\theta)f(\sigma u(\theta),v(\theta)d\theta\\&
      \geq  c_4 \int _a^b k_1(t,\theta) u(\theta)d\theta\\&
      \geq  c_1c_4\int _a^b k_1(t,\theta)d\theta.
\end{align*}
Therefore, we further have  \begin{align*}
    \int_0^1 \left( \int_0^1 \frac{1}{\sigma }k_1(t,\theta) f(\sigma u(\theta),v(\theta)d\theta\right) u(t)dt &
    \geq  c_1c_4 \int_0^1 \left(\int _a^b k_1(t,\theta)d\theta \right)u(t)dt\\&
    \geq  c_1c_4 \int_a^b \left(\int _a^b k_1(t,\theta)d\theta \right)u(t)dt
    \\& \geq c_1^2 c_4\int_a^b \int _a^b k_1(t,\theta)d\theta dt\\&
 =  c_1^2 c_4\alpha_2.
\end{align*}
Applying the above estimate in \eqref{forma F}, together with the obvious inequality \begin{equation*}
    \int_0^1 u(t)^2 dt\leq |u|_\infty^2=1,
\end{equation*} and relation \eqref{cond_const_c_3}, we obtain
\begin{align*}
      \mathcal{F}\left( T_1(\sigma u,v),\sigma u\right)&\geq \sigma ^2 \left( c_1^2 c_4 \alpha-\int_0^1 u(t)^2dt\right)\\&
      \geq \sigma ^2 \left( c_1^2 c_4  \alpha_2 -1\right)\\& >0.
\end{align*}
Consequently, if $   \mathcal{F}\left( T_1(\sigma u,v),\sigma u\right)=0$ for some $|u|_\infty$ and $|v|_\infty\leq R$, then we necessarily  have $\sigma \leq \sigma _0$. Thus, condition $\text{(h2)}$ is verified.

\smallskip

\textbf{Check of condition (h1).} 
Let \(u \in K_1 \setminus \{0\}\) and $v\in D$. From~$\text{(H4)}$, we see that the mapping
\[
\sigma  \mapsto \int_0^1 \left( \int_0^1 \frac{1}{\sigma }\,k_1(t,\theta)\,f\bigl(\sigma \,u(\theta),\,v(\theta)\bigr)\,d\theta \right)\,u(t)\,dt
-
\int_0^1 u^2(t)\,dt
\]
is strictly increasing. Therefore, corroborating this with (h2), we conclude that there exists a unique \(\sigma^\ast>0\) such that
\[
\mathcal{F}\bigl( T_1(\sigma^\ast\,u,\,v),\,\sigma^\ast\,u \bigr) = 0.
\]
Thus, defining $s(u,v):=\sigma^\ast$, condition $\text{(h1)}$ follows.

\begin{rem}[Typical examples of function \(f\)]

Let \( f_1 \colon \mathbb{R} \to \mathbb{R} \) be a continuous and positive function. Then the function
\[
f(x, y) = |x|^p\, f_1(y),
\]
with \( p > 1 \), satisfies conditions (H2)–(H4).  
The function \( g \) can be any continuous function bounded by a  constant as specified in (H5).

\end{rem}

\begin{rem}
Integral systems like \eqref{pb aplicatie} often arise from the reformulation of bi-local boundary value problems, where the kernels \(k_1\) and \(k_2\) correspond to Green’s functions \cite{cabada2014greens}. For this class of problems, properties \eqref{inegalitate K dreapta} and \eqref{inegalitate K stanga} are properties of the Green’s function (see, e.g., \cite{erbe1994positive, precup_rodriguez2023}), where condition \eqref{inegalitate K stanga} is a Harnack-type inequality \cite{precup_rodriguez2023}.

\end{rem}
\phantom{ }
\newline
\textbf{Acknowledgements}
The authors thank the reviewers for taking the time to read the manuscript and for their appreciations. \phantom{ }
\newline
\textbf{Author contributions}
 Both authors have contributed equally to the preparation
of this manuscript. Both authors read and approved the final manuscript.
\phantom{ }
\phantom{ }
\newline
\textbf{Funding}
Authors did not receive any funding for this work.
\phantom{ }

\phantom{ }
\newline
\textbf{Data Availability }
No datasets were generated or analyzed during the current study.
\phantom{ }
\newline
\textbf{Declarations}
\phantom{ }

\phantom{ }
\newline
\textbf{Conflict of interest} The authors have no relevant financial or non-financial
interests to disclose.

\end{document}